\documentclass[10pt]{article}
\usepackage{amssymb}
\oddsidemargin   = 0 cm \evensidemargin  = 0 cm \textwidth  =
11.4 cm \textheight = 18.1 cm \headheight=0cm \topskip=0cm
\topmargin=0cm
\newtheorem{precor}{{\bf Corollary}}

\newenvironment{cor}{\begin{precor}{\hspace{-0.5
               em}{\bf.\ }}}{\end{precor}}
\newtheorem{precon}{{\bf Conjecture}}

\newtheorem{prealphcon}{{\bf Conjecture}}

\newtheorem{predefin}{{\bf Definition}}

\newenvironment{defin}[1]{\begin{predefin}{\hspace{-0.5
                   em}{\bf.\ }}{\rm #1}\hfill{$\spadesuit$}}{\end{predefin}}
\newtheorem{preexm}{{\bf Example}}

\newtheorem{preappl}{{\bf Application}}

\newtheorem{prelem}{{\bf Lemma}}

\newtheorem{preproof}{{\bf Proof.\ }}

\newenvironment{proof}[1]{\begin{preproof}{\rm
               #1}\hfill{$\blacksquare$}}{\end{preproof}}
\newtheorem{pretheorem}{{\bf Theorem}}

\newenvironment{theorem}{\begin{pretheorem}{\hspace{-0.5
               em}{\bf.\ }}}{\end{pretheorem}}
\newtheorem{prealphtheorem}{{\bf Theorem}}

\newtheorem{prealphlem}{{\bf Lemma}}

\newtheorem{prepro}{{\bf Proposition}}

\newtheorem{preprb}{{\bf Problem}}

\newtheorem{prerem}{{\bf Remark}}

\newtheorem{preapp}{{\bf Application}}

\newtheorem{prequ}{{\bf Question}}

%

\def\conct[#1,#2]{\mbox {${#1} \leftrightarrow {#2}$}}
\def\dconct[#1,#2]{\mbox {${#1} \rightarrow {#2}$}}
\def\deg[#1,#2]{\mbox {$d_{_{#1}}(#2)$}}
\def\mindeg[#1]{\mbox {$\delta_{_{#1}}$}}
\def\maxdeg[#1]{\mbox {$\Delta_{_{#1}}$}}
\def\outdeg[#1,#2]{\mbox {$d_{_{#1}}^{^+}(#2)$}}
\def\minoutdeg[#1]{\mbox {$\delta_{_{#1}}^{^+}$}}
\def\maxoutdeg[#1]{\mbox {$\Delta_{_{#1}}^{^+}$}}
\def\indeg[#1,#2]{\mbox {$d_{_{#1}}^{^-}(#2)$}}
\def\minindeg[#1]{\mbox {$\delta_{_{#1}}^{^-}$}}
\def\maxindeg[#1]{\mbox {$\Delta_{_{#1}}^{^-}$}}

\def\dre[#1,#2,#3]{\mbox {${\cal E}^{^{#3}}(#1,#2)$}}
\def\var[#1,#2]{\mbox {${\rm Var}_{_{#1}}(#2)$}}
\def\ls[#1]{\mbox {$\xi^{^{#1}}$}}
\def\hom[#1,#2]{\mbox {${\rm Hom}({#1},{#2})$}}
\def\onvhom[#1,#2]{\mbox {${\rm Hom^{v}}(#1,#2)$}}
\def\onehom[#1,#2]{\mbox {${\rm Hom^{e}}(#1,#2)$}}
\def\core[#1]{\mbox {$#1^{^{\bullet}}$}}
\def\cay[#1,#2]{\mbox {${\rm Cay}({#1},{#2})$}}
\def\sch[#1,#2,#3]{\mbox {${\rm Sch}({#1},{#2},{#3})$}}
\def\cays[#1,#2]{\mbox {${\rm Cay_{s}}({#1},{#2})$}}
\def\dirc[#1]{\mbox {$\stackrel{\rightarrow}{C}_{_{#1}}$}}
\def\cycl[#1]{\mbox {${\bf Z}_{_{#1}}$}}

\begin{document}

\begin{center}
{\Large \bf On $b$-continuity of Kneser Graphs of Type $KG(2k+1,k)$}\\
\vspace{0.3 cm}
{\bf Saeed Shaebani}\\
{\it Department of Mathematical Sciences}\\
{\it Institute for Advanced Studies in Basic Sciences {\rm (}IASBS{\rm )}}\\
{\it P.O. Box {\rm 45195-1159}, Zanjan, Iran}\\
{\tt s\_shaebani@iasbs.ac.ir}\\ \ \\
\end{center}
\begin{abstract}
\noindent In this paper, we will introduce an special kind of graph homomorphisms namely semi-locally-surjective graph homomorphisms and show some relations between semi-locally-surjective graph homomorphisms and colorful colorings of graphs and then we prove that for each natural number $k$, the Kneser
graph $KG(2k+1,k)$ is $b$-continuous. Finally, we introduce some
special conditions for graphs to be $b$-continuous.

\noindent
\\
{\bf Keywords:}\ {  graph colorings, colorful colorings, Kneser graphs, semi-locally-surjective graph homomorphisms.}\\
{\bf Subject classification: 05C}
\end{abstract}
\section{Introduction}
All graphs considered in this paper are finite and simple
(undirected, loopless and without multiple edges). Let $G=(V,E)$
be a graph and $k\in \mathbb{N}$ and let $[k]:=\{i|\ i\in
\mathbb{N},\ 1\leq i\leq k \}$. A $k$-coloring (proper k-coloring)
of $G$ is a function $f:V\rightarrow [k]$ such that for each
$1\leq i\leq k$, $f^{-1}(i)$ is an independent set. We say that
$G$ is $k$-colorable whenever $G$ has a $k$-coloring $f$, in this
case, we denote $f^{-1}(i)$ by $V_{i}$ and call each $1\leq i\leq
k$, a color (of $f$) and each $V_{i}$, a color class (of $f$). The
minimum integer $k$ for which $G$ has a $k$-coloring, is called
the chromatic number of G and is denoted by $\chi(G)$.

Let $G$ be a graph and $f$ be a $k$-coloring of $G$ and $v$ be a
vertex of $G$. The vertex $v$ is called $b$-dominating ( or
colorful or color-dominating) ( with respect to $f$) if each
color $1\leq i\leq k$ appears on the closed neighborhood of $v$ (
$f(N[v])=[k]$ ). The coloring $f$ is said to be a colorful
$k$-coloring of $G$ if each color class $V_{i}\ (1\leq i\leq k) $
contains a $b$-dominating vertex $x_{i}$. Obviously, every
$\chi(G)$-coloring of $G$ is a colorful $\chi(G)$-coloring of
$G$. We denote ${\rm B}(G)$ the set of all positive integers $k$
for which $G$ has a colorful $k$-coloring. The maximum of ${\rm
B}(G)$, is called the $b$-chromatic number of $G$ and is denoted
by $b(G)$ (or $\phi(G)$ or $\chi_{b}(G)$). The graph $G$ is said
to be $b$-continuous if each integer $k$ between $\chi(G)$ and
$b(G)$ is an element of ${\rm B}(G)$. There are graphs that are
not $b$-continuous, for example, the 3-dimensional cube $Q_{3}$
is not $b$-continuous, because $2\in {\rm B}(G)$ and $4\in {\rm
B}(G)$ but $3\notin {\rm B}(G)$ ( \cite{irv} ). We have to note that
the problem of deciding whether graph $G$ is $b$-continuous is
NP-complete ( \cite{bar} ). The colorful coloring of graphs was
introduced in 1999 in \cite{irv} with the terminology
$b$-coloring.

Let $m,n\in \mathbb{N}$ and $m\leq n$. $KG(n,m)$ is the graph
whose vertex set is the set of all subsets of size $m$ of $[n]$ in
which two vertices $X$ and $Y$ are adjacent iff $X\bigcap
Y=\emptyset$. Note that $KG(5,2)$ is the famous Petersen graph.
It was conjectured by Kneser in 1955 ( \cite{KG} ), and proved by
Lov{\'a}sz in 1978 ( \cite{lov} ), that if $n\geq 2m$, then $\chi
(KG(n,m))=n-2m+2$. Lov{\'a}sz's proof was the beginning of using
algebraic topology in combinatorics. Colorful colorings of Kneser
graphs have been investigated in \cite{haj} and \cite{jav}.
Javadi and Omoomi in \cite{jav} showed that for $n\geq 17$,
$KG(n,2)$ is $b$-continuous. Only a few classes of graphs are
known to be $b$-continuous (see \cite{bar}, \cite{fai} and
\cite{jav}). We want to prove that for each natural number $k$,
$KG(2k+1,k)$ is $b$-continuous. In this regard, first we
introduce an special kind of graph homomorphisms which is related
to colorful colorings of graphs.

\begin{defin}{ Let $G$ and $H$ be graphs. A function
$f:V(G)\rightarrow V(H)$ is called a semi-locally-surjective graph homomorphism
from $G$ to $H$ if $f$ is a surjective graph homomorphism
from $G$ to $H$ and satisfies the following condition :
\\
\\
 $\forall u \in V(H):\  \exists a\in f^{-1}(u)\  s.t\  \forall v\in N_{H}(u):\  \exists b\in f^{-1}(v)\  s.t\  \{a,b\}\in E(G) $.

}\end{defin}

We know that a graph $G$ is $k$-colorable iff there exists a graph homomorphism from $G$ to the complete graph $K_{k}$ and the chromatic number of $G$ is the least natural number $k$ for which there exists a graph homomorphism from $G$ to $K_{k}$. Indeed, we can think of graph homomorphisms from graphs to complete graphs instead of graph colorings. The following obvious theorem shows such a similar relation between colorful colorings of graphs and semi-locally-surjective graph homomorphisms. Indeed, we can think of semi-locally-surjective graph homomorphisms from graphs to complete graphs instead of colorful colorings of graphs.

\begin{theorem}{ Let $G$ be a graph and $k\in \mathbb{N}$. Then $k\in {\rm B}(G)$ iff there
exists a semi-locally-surjective graph homomorphism from $G$ to
$K_{k}$. Also, the chromatic number of $G$ ( $\chi(G)$ ) and the $b$-chromatic number of $G$ ( $b(G)$ ) are respectively the least and the greatest natural numbers $k$ for which there exists a semi-locally-surjective graph homomorphism from $G$ to $K_{k}$  . }
\end{theorem}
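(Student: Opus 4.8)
The plan is to observe that, after identifying $V(K_k)$ with $[k]$, a colorful $k$-coloring of $G$ and a semi-locally-surjective homomorphism $G\to K_k$ are literally the same object, so the whole statement reduces to unwinding the two definitions. The single point to keep in mind throughout is that $K_k$ is complete, so $N_{K_k}(u)=[k]\setminus\{u\}$ for every vertex $u$; this is precisely what converts the closed-neighbourhood condition $f(N[v])=[k]$ into the nested existential clause appearing in the definition of a semi-locally-surjective homomorphism, and it is the only step carrying genuine content.

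For the forward implication I would start from a colorful $k$-coloring $f$ of $G$ with $b$-dominating vertices $x_1,\dots,x_k$ and verify the three requirements. Since each color class is independent, adjacent vertices of $G$ receive distinct colors and hence adjacent images in $K_k$, so $f$ is a graph homomorphism; each class $V_i$ contains $x_i$ and is therefore nonempty, giving surjectivity. For the semi-local clause at a vertex $u\in[k]$ I would take $a=x_u$: the hypothesis $f(N[x_u])=[k]$ together with $f(x_u)=u$ forces every color $v\neq u$ to occur already on the open neighbourhood $N(x_u)$, i.e. some $b\in f^{-1}(v)$ is adjacent to $x_u$, which is exactly what is demanded.

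For the converse I would read the same equivalences backwards. A semi-locally-surjective homomorphism $f\colon V(G)\to[k]$ is in particular a homomorphism, so adjacent vertices get adjacent, hence distinct, colors and every class is independent; thus $f$ is a proper $k$-coloring, and surjectivity guarantees all $k$ colors are used. For each $u$ the witness $a$ supplied by the semi-local clause is a vertex of $V_u$ whose neighbours realize every color $v\neq u$, so $f(N[a])=[k]$ and $a$ is $b$-dominating in $V_u$; hence $f$ is colorful and $k\in {\rm B}(G)$. This establishes the equivalence $k\in {\rm B}(G)\iff$ there is a semi-locally-surjective homomorphism $G\to K_k$.

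The last sentence then follows immediately. By definition $b(G)=\max {\rm B}(G)$ is the greatest such $k$, so only $\chi(G)=\min {\rm B}(G)$ needs comment. No $k<\chi(G)$ lies in ${\rm B}(G)$, since $G$ has no proper $k$-coloring at all, and $\chi(G)\in {\rm B}(G)$ by the fact already noted in the introduction that every minimum coloring is colorful: if some class $V_i$ had no $b$-dominating vertex, recoloring each $v\in V_i$ by a color missing from $f(N[v])$, which is necessarily different from $i$, would produce a proper $(\chi(G)-1)$-coloring, a contradiction. Hence $\chi(G)$ is the least $k$ admitting a semi-locally-surjective homomorphism to $K_k$. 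Beyond this bookkeeping there is essentially no obstacle; the whole argument turns on the translation effected by the completeness of $K_k$.
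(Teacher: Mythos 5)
Your proof is correct and supplies exactly the definitional unwinding that the paper leaves implicit: the paper states this theorem without any proof, simply calling it obvious. Your translation via $N_{K_{k}}(u)=[k]\setminus\{u\}$ between the closed-neighborhood condition and the nested existential clause, and the standard recoloring argument showing $\chi(G)\in {\rm B}(G)$, are precisely the intended steps.
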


We know that the composition of two graph homomorphisms is again a graph homomorphism. A similar theorem holds for composition of semi-locally-surjective graph homomorphisms.

\begin{theorem}{ Let $G_{1}$, $G_{2}$ and $G_{3}$ be graphs. If $g$ is a semi-locally-surjective graph homomorphism from $G_{2}$ to $G_{1}$ and $f$ is a semi-locally-surjective graph homomorphism from $G_{3}$ to
$G_{2}$, then $gof$ is a semi-locally-surjective graph homomorphism from $G_{3}$ to
$G_{1}$. }
\end{theorem}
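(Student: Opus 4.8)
The plan is to verify the two defining requirements of a semi-locally-surjective homomorphism for $g\circ f$ separately: first the routine fact that it is a surjective graph homomorphism, and then the local witness condition, which is where the actual content lies.

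First I would note that since $g:V(G_2)\to V(G_1)$ and $f:V(G_3)\to V(G_2)$ are both surjective graph homomorphisms, their composition $g\circ f:V(G_3)\to V(G_1)$ is again a surjective graph homomorphism. This is just the standard fact that graph homomorphisms compose and that a composite of surjections is surjective, so no new idea is needed at this stage.

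The heart of the argument is the local condition. Fix an arbitrary vertex $u\in V(G_1)$. Applying the condition for $g$ at $u$, I obtain a preimage $a\in g^{-1}(u)$ such that every neighbour $v\in N_{G_1}(u)$ admits some preimage $b_v\in g^{-1}(v)$ with $\{a,b_v\}\in E(G_2)$. Now $a$ is a vertex of $G_2$, so I apply the condition for $f$ at $a$ to obtain a preimage $c\in f^{-1}(a)$ such that every neighbour $a'\in N_{G_2}(a)$ admits some preimage $d_{a'}\in f^{-1}(a')$ with $\{c,d_{a'}\}\in E(G_3)$. I claim that $c$ is the required witness for $u$ under $g\circ f$; note first that $(g\circ f)(c)=g(a)=u$, so $c\in(g\circ f)^{-1}(u)$.

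To finish, take any $w\in N_{G_1}(u)$. The witness $a$ supplies $b_w\in g^{-1}(w)$ with $\{a,b_w\}\in E(G_2)$, so $b_w\in N_{G_2}(a)$; then the witness $c$ supplies $d_{b_w}\in f^{-1}(b_w)$ with $\{c,d_{b_w}\}\in E(G_3)$. Since $(g\circ f)(d_{b_w})=g(b_w)=w$, the vertex $d_{b_w}$ lies in $(g\circ f)^{-1}(w)$ and is adjacent to $c$ in $G_3$, which is exactly what the condition demands. The only point requiring care — and the closest thing to an obstacle in this otherwise mechanical proof — is getting the chaining right: one must feed the $g$-neighbour $b_w$ of $a$ into the $f$-condition at $a$, rather than attempting to match $w$ directly, so that the two local surjectivity properties interlock through the intermediate vertex $a$.
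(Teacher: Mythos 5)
Your proof is correct: the chaining of the two witness conditions through the intermediate vertex $a$ is exactly the right mechanism, and each step checks out. The paper states this theorem without proof, treating it as routine, and your argument is precisely the verification it leaves implicit, so nothing further is needed.
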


The following theorem shows another relation between semi-locally-surjective graph homomorphisms and colorful colorings of graphs.

\begin{theorem}{\label{thm1} Let $G_{1}$ and $G_{2}$ be graphs. If there
exists a semi-locally-surjective graph homomorphism from $G_{1}$ to
$G_{2}$, then ${\rm B}(G_{2}) \subseteq {\rm B}(G_{1})$. }
\end{theorem}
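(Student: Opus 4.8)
The plan is to deduce this statement directly from the characterization in Theorem~1 together with the composition property in Theorem~2, so that the proof reduces to a single composition argument. Recall that Theorem~1 identifies membership $k\in{\rm B}(G)$ with the existence of a semi-locally-surjective graph homomorphism from $G$ to the complete graph $K_{k}$. This equivalence is what lets me translate the desired set-inclusion ${\rm B}(G_{2})\subseteq{\rm B}(G_{1})$ entirely into the language of such homomorphisms, where the work has already been done.

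First I would fix an arbitrary $k\in{\rm B}(G_{2})$ and aim to show $k\in{\rm B}(G_{1})$. Applying Theorem~1 to $G_{2}$, there is a semi-locally-surjective graph homomorphism $f\colon V(G_{2})\rightarrow V(K_{k})$. By hypothesis we are also given a semi-locally-surjective graph homomorphism $g\colon V(G_{1})\rightarrow V(G_{2})$. I would then form the composite $f\circ g\colon V(G_{1})\rightarrow V(K_{k})$ and invoke Theorem~2, applied to the pair consisting of the inner map $g$ (out of $G_{1}$) and the outer map $f$ (into $K_{k}$), to conclude that $f\circ g$ is again a semi-locally-surjective graph homomorphism. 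Finally, applying the converse direction of Theorem~1, this time to $G_{1}$, yields $k\in{\rm B}(G_{1})$. Since $k\in{\rm B}(G_{2})$ was arbitrary, we obtain ${\rm B}(G_{2})\subseteq{\rm B}(G_{1})$, as claimed.

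The only point demanding attention is bookkeeping of the composition order: one must match the three graphs in the statement of Theorem~2 so that its source is $G_{1}$, its middle graph is $G_{2}$, and its target is $K_{k}$, with the given homomorphism playing the role of the inner map and the coloring-homomorphism playing the role of the outer map. Beyond this routine identification there is no genuine combinatorial obstacle, because the substantive content---namely that the ``witness vertex'' condition in the definition of a semi-locally-surjective homomorphism is preserved under composition---has already been absorbed into Theorem~2. In this sense the present theorem is a purely functorial consequence of the two facts that ${\rm B}(G)$ is detected by maps into complete graphs and that semi-locally-surjective homomorphisms compose.
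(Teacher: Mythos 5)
Your proof is correct, but it takes a different route from the paper's. The paper argues directly from the definition of a colorful coloring: given a colorful $k$-coloring of $G_{2}$ with color classes $V_{1},\ldots,V_{k}$ and $b$-dominating vertices $x_{1},\ldots,x_{k}$, it pulls the coloring back along the given homomorphism $f$, observing that $f^{-1}(V_{1}),\ldots,f^{-1}(V_{k})$ are nonempty color classes of a $k$-coloring of $G_{1}$ and that each fiber $f^{-1}(x_{i})$ contains a $b$-dominating vertex (namely the witness $a$ supplied by the semi-locally-surjective condition). You instead treat Theorems~1 and~2 as black boxes and obtain the inclusion as a purely formal consequence: encode $k\in{\rm B}(G_{2})$ as a semi-locally-surjective homomorphism $G_{2}\rightarrow K_{k}$, compose with the given $G_{1}\rightarrow G_{2}$, and decode. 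Your bookkeeping of the roles in Theorem~2 (inner map out of $G_{1}$, outer map into $K_{k}$) is consistent with the paper's statement of that theorem, so the argument goes through. What your version buys is brevity and modularity --- the theorem becomes a one-line corollary of the two preceding results --- and it sidesteps a small imprecision in the paper's own write-up (the paper calls the sets $f^{-1}(x_{i})$ ``$b$-dominating vertices'' when only a particular element of each fiber is guaranteed to be one). What it costs is self-containedness: the paper never actually proves Theorem~2 (and calls Theorem~1 ``obvious''), so the substantive verification that the witness-vertex condition survives composition, which the paper's direct proof carries out in the special case of target $K_{k}$, is deferred in your argument to results whose proofs are omitted. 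If you were writing this up in full, you would want to supply the proof of Theorem~2, at which point the total work is essentially the same.
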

\begin{proof}{ Let $f$ be a semi-locally-surjective graph homomorphism
from $G_{1}$ to $G_{2}$, $k\in {\rm B}(G_{2})$, and $V_{1},\ldots ,V_{k}$
be color classes of a colorful $k$-coloring of $G_{2}$ and $x_{1},\ldots ,x_{k}$
be some $b$-dominating vertices of $G_{2}$ with respect to this $k$-coloring and $x_{i}\in V_{i}\ (1\leq i\leq k)$. Obviously, $f^{-1}(V_{1}),\ldots
,f^{-1}(V_{k})$ are nonempty color classes of a $k$-coloring of $G_{1}$ and $f^{-1}(x_{1}),\ldots ,f^{-1}(x_{k})$
are some $b$-dominating vertices of $G_{1}$ with respect to this $k$-coloring and $f^{-1}(x_{i})\in f^{-1}(V_{i})\ (1\leq i\leq k)$. Therefore, $G_{1}$ has a colorful $k$-coloring and $k\in {\rm B}(G_{1})$. Hence, ${\rm B}(G_{2})
\subseteq {\rm B}(G_{1})$. }
\end{proof}

Now we prove that for each natural number $k$, $KG(2k+1,k)$ is
$b$-continuous.

\begin{theorem}{ For each $k\in \mathbb{N}$, $KG(2k+1,k)$ is
$b$-continuous. }
\end{theorem}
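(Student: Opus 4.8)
The plan is to use Theorem 1 to turn the statement into a purely constructive problem. By Lov\'asz's theorem $\chi(KG(2k+1,k))=(2k+1)-2k+2=3$, and $KG(2k+1,k)$ is $(k+1)$-regular (the $k$-sets disjoint from a fixed $k$-set $X$ are exactly the $k$-subsets of the $(k+1)$-set $[2k+1]\setminus X$), so $b(KG(2k+1,k))\le k+2$ and hence ${\rm B}(KG(2k+1,k))\subseteq\{3,4,\ldots,k+2\}$. By Theorem 1 it therefore suffices to produce, for every integer $t$ with $3\le t\le b(KG(2k+1,k))$, a semi-locally-surjective graph homomorphism from $KG(2k+1,k)$ onto $K_{t}$, i.e.\ a colorful $t$-coloring.

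The local fact I would exploit is that the $k+1$ neighbors of a vertex $X$, namely the sets $([2k+1]\setminus X)\setminus\{e\}$ with $e\in[2k+1]\setminus X$, pairwise meet in $k-1\ge 1$ elements and so are mutually non-adjacent. Thus $N[X]$ can be painted as a ``rainbow'' — $X$ in one color and the remaining colors spread over its neighbors — with no internal conflict, the only constraints coming from vertices outside $N[X]$. To produce a colorful $t$-coloring I would start from the standard proper three-coloring (color $X$ by $\min X$ capped at $3$; the top class consists of the $k$-subsets of $\{3,\ldots,2k+1\}$, which is independent since any two $k$-subsets of a $(2k-1)$-set meet) and then install $t$ far-apart vertices $x_{1},\ldots,x_{t}$ as $b$-dominating vertices by overwriting the coloring only inside their closed neighborhoods, giving $x_{i}$ the color $i$ and arranging its neighbors to realize all of $\{1,\ldots,t\}\setminus\{i\}$.

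For the extreme value $t=k+2$ this is clean and also pins down the $b$-chromatic number. Since $KG(2k+1,k)$ has ${2k+1\choose k}$ vertices while a ball of radius $3$ about any vertex has only $O(k^{3})$ of them, for large $k$ one can greedily choose $x_{1},\ldots,x_{k+2}$ pairwise at distance at least $4$; their closed neighborhoods are then disjoint and mutually non-adjacent, each is painted as a full rainbow on $\{1,\ldots,k+2\}$, and the rest of the graph is colored greedily — every uncolored vertex has at most $k+1$ colored neighbors, so one of the $k+2$ colors is always free. This yields a colorful $(k+2)$-coloring and shows $b(KG(2k+1,k))=k+2$ for all large $k$.

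The harder part, which I expect to be the main obstacle, is the extension for the smaller values $3\le t<k+2$ (and for moderate $k$, where a distance-$4$ packing of $k+2$ centers need not exist): now $k+1\ge t$, so a naive greedy completion can exhaust all $t$ colors in a neighborhood and stall. Here I would rely on the girth of $KG(2k+1,k)$ being at least $5$ — there are no triangles since $3k>2k+1$, and the apparent $4$-cycles degenerate — so that recoloring a neighbor of $x_{i}$ is constrained only by vertices strictly outside the star $N[x_{i}]$; choosing the centers where the background three-coloring is locally favorable then lets one turn each $N[x_{i}]$ into a rainbow while preserving global properness. Showing that this can always be arranged, uniformly in $t$, is the technical heart. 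The very small cases, where the counting arguments fail, are finished by inspection: $KG(3,1)=K_{3}$, and $KG(5,2)$ is the Petersen graph, whose diameter is $2$ and for which $b=\chi=3$, so continuity is vacuous. Collecting the colorful $t$-colorings for every $t$ from $3$ up to $b(KG(2k+1,k))$ then exhibits ${\rm B}(KG(2k+1,k))$ as a full interval, proving that $KG(2k+1,k)$ is $b$-continuous.
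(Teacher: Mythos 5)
Your reduction is sound as far as it goes: $\chi(KG(2k+1,k))=3$, the graph is $(k+1)$-regular so ${\rm B}(KG(2k+1,k))\subseteq\{3,\dots,k+2\}$, and by Theorem 1 it suffices to exhibit a colorful $t$-coloring for each $t$ in that range. Your packing argument for the extreme value $t=k+2$ (centers pairwise at distance at least $4$, rainbow stars, greedy completion using $(k+1)$-regularity) is plausible for large $k$, though the paper simply quotes Javadi and Omoomi for $k+2\in{\rm B}(KG(2k+1,k))$. But the step you yourself flag as ``the technical heart'' --- that for every intermediate $3\le t<k+2$ one can choose centers and recolor their closed neighborhoods into $t$-rainbows while preserving properness --- is exactly the content of the theorem, and you give no argument for it. The difficulty is real: once $t\le k+1$ the greedy completion can stall, and girth at least $5$ alone does not rescue it; an actual construction (or some merging/deletion device) is needed and is not supplied. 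As it stands the proposal is an outline with the central step missing.

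The paper sidesteps the direct construction entirely by a different mechanism: it builds a semi-locally-surjective homomorphism from $KG(2k+3,k+1)$ onto $KG(2k+1,k)$, so that by Theorem 3 the sets ${\rm B}(KG(2i+1,i))$ form an increasing chain in $i$. Each intermediate value $t=i+2$ with $3\le i\le k$ then comes for free from the known extreme value $b(KG(2i+1,i))=i+2$ of the \emph{smaller} Kneser graph, pushed up the chain; the value $3$ comes from $\chi$, and the single remaining value $4$ is handled by one explicit colorful $4$-coloring of $KG(7,3)$, again propagated upward (the cases $k=1,2$ being trivial since there ${\rm B}=\{3\}$). Your proposal has no device for transferring colorful colorings between different Kneser graphs, and that transfer is precisely what makes the intermediate values tractable. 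To salvage your approach you would need either to prove the recoloring claim uniformly in $t$, or to introduce something playing the role of the paper's homomorphism chain.
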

\begin{proof}{
For each $k\in \mathbb{N}$, $\chi (KG(2k+1,k))=3$. Note that
${\rm B}(KG(3,1))={\rm B}(K_{3})=\{3\}$ and therefore, for $k=1$
the assertion follows. Blidia, et al. in \cite{bli} proved that
the $b$-chromatic number of the Petersen graph is 3 and therefore,
${\rm B}(KG(5,2))={\rm B}(Petersen\ graph)=\{3\}$. Hence,
$KG(2k+1,k)$ is $b$-continuous for $k=2$. For $k\geq3$, the
function $f:V(KG(2k+3,k+1))\rightarrow V(KG(2k+1,k))$ which
assigns to each $A\subseteq [2k+3]$ with $|A\bigcap
\{2k+2,2k+3\}|\leq1$, $f(A)=A\setminus\{\max A\}$ and to each
$A\subseteq [2k+3]$ with $\{2k+2,2k+3\}\subseteq A$,
$f(A)=(A\setminus\{2k+2,2k+3\})\bigcup \{\max ([2k+1]\setminus
A)\}$, is a surjective graph homomorphism from $KG(2k+3,k+1)$ to
$KG(2k+1,k)$. Now for each $X\in V(KG(2k+1,k))$, $(X \bigcup \{2k+2\})\in f^{-1}(X)$ and for each $Y\in N_{KG(2k+1,k)}(X)$, $(Y \bigcup \{2k+3\})\in f^{-1}(Y)$ and $\{X \bigcup \{2k+2\},Y \bigcup \{2k+3\}\}\in E(KG(2k+3,k+1))$. Hence, $f$ is a semi-locally-surjective graph homomorphism from $KG(2k+3,k+1)$ to
$KG(2k+1,k)$. Consequently, Theorem \ref{thm1} implies that
${\rm B}(KG(2k+1,k)) \subseteq {\rm B}(KG(2k+3,k+1))$, besides,
${\rm B}(KG(7,3))\subseteq {\rm B}(KG(9,4))\subseteq ...\subseteq
{\rm B}(KG(2n+1,n)) \subseteq ...$ . (I)

On the other hand, Javadi and Omoomi in \cite{jav} showed that
for $k\geq 3$, $b(KG(2k+1,k))=k+2$ and $k+2\in {\rm
B}(KG(2k+1,k))$. Therefore, for each $k\geq 3$, $\{i+2|\
i\in\mathbb{N},\ 3\leq i\leq k\}\subseteq {\rm B}(KG(2k+1,k))$.
Also, since $\chi (KG(2k+1,k))=3$, $3\in {\rm B}(KG(2k+1,k))$.
So, constructing a colorful 4-coloring of $KG(2k+1,k)\ (k\geq3)$
completes the proof. (I) implies that it is enough to construct a
colorful 4-coloring of $KG(7,3)$. Set
$$\begin{array}{cc}
V_{1}:=\{\ \{1 ,2 ,3 \},\{1 ,4 ,5 \},\{2 , 5,6 \},\{ 1,2 ,6
\},\{ 1,2 ,7 \},\{ 1,3 ,6 \},\{ 1,6 ,7 \},  &  \\
\{ 1, 4, 6\}\ \},   &  \\
V_{2}:=\{\ \{5 , x, y\}\ |\ x,y\in \{1,2,3,4,6,7\},\ x\neq y\
\}\setminus\{\ \{1 , 4,5 \},\{2 ,5 ,6 \},   &  \\
\{4 ,5 , 7\}\ \},   &  \\
V_{3}:=\{\ \{1 ,2 ,4 \},\{1 ,3 , 7\},\{4 ,5 ,7 \},\{1 ,4 ,7
\},\{2 ,6 ,7 \}\ \},   & \\
V_{4}:=(\{\ \{4 ,x ,y \}\ |\ x,y\in \{1,2,3,6,7\},\ x\neq y
\}\setminus\{\ \{1 ,2 ,4 \},\{ 1,4 ,6 \}, & \\
\{1 ,4 ,7 \}\ \})\ \bigcup\ \{\ \{2 ,3 ,6 \},\{2 ,3 ,7 \},\{3 ,6
,7 \}\ \}. &
\end{array}$$

Now, one can check that $V_{1},\ V_{2},\ V_{3},\ V_{4}$ are color
classes of a colorful 4-coloring of $KG(7,3)$ that $ \{1 ,2 ,3
\}\in V_{1},\ \{5 ,6 ,7 \}\in V_{2},\ \{2 ,6 ,7 \}\in V_{3}$ and
$\{1 ,3 ,4 \}\in V_{4}$ are some $b$-dominating vertices with
respect to this $4$-coloring.}
\end{proof}

The semi-locally-surjective graph homomorphism $f$ in above Theorem can
be generalized as follows.

\begin{theorem}{ Let $n,m\in\mathbb{N}$ with $n>2m$.
Then ${\rm B}(KG(n,m))\subseteq {\rm B}(KG(n+2,m+1))$.}
\end{theorem}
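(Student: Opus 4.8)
The plan is to imitate, in full generality, the homomorphism built in the proof of the preceding theorem and then appeal to Theorem \ref{thm1}. I would define $f:V(KG(n+2,m+1))\rightarrow V(KG(n,m))$ by declaring, for every $(m+1)$-subset $A$ of $[n+2]$ with $|A\cap\{n+1,n+2\}|\leq 1$, that $f(A)=A\setminus\{\max A\}$, and for every $(m+1)$-subset $A$ with $\{n+1,n+2\}\subseteq A$, that $f(A)=(A\setminus\{n+1,n+2\})\cup\{\max([n]\setminus A)\}$. The first routine point is that $f$ is well defined with values in $V(KG(n,m))$: in the first clause either $A\subseteq[n]$, or $\max A\in\{n+1,n+2\}$ and its deletion lands $f(A)$ in $[n]$, so $f(A)$ is an $m$-subset of $[n]$; in the second clause $|A\cap[n]|=m-1$ and the hypothesis $n>2m$ guarantees $[n]\setminus A\neq\emptyset$, so the added maximum exists and $|f(A)|=m$. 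By Theorem \ref{thm1}, it then suffices to check that $f$ is a semi-locally-surjective graph homomorphism, since this yields ${\rm B}(KG(n,m))\subseteq{\rm B}(KG(n+2,m+1))$.

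For the semi-local surjectivity (which simultaneously gives surjectivity of $f$), I would reuse the witnesses of the previous proof. Given $X\in V(KG(n,m))$, set $a=X\cup\{n+1\}$; as $X\subseteq[n]$ this falls under the first clause with $\max a=n+1$, so $f(a)=X$, establishing $a\in f^{-1}(X)$ and surjectivity. For each neighbour $Y$ of $X$ in $KG(n,m)$ (i.e. $Y\cap X=\emptyset$), set $b=Y\cup\{n+2\}$, so that likewise $f(b)=Y$. Since $X$ and $Y$ are disjoint subsets of $[n]$ and $n+1\neq n+2$, the sets $a$ and $b$ are disjoint, whence $\{a,b\}\in E(KG(n+2,m+1))$; this is exactly the condition in the definition.

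The genuine obstacle is verifying that $f$ is a homomorphism, i.e. that $A\cap B=\emptyset$ forces $f(A)\cap f(B)=\emptyset$; I would organise this by the distribution of $\{n+1,n+2\}$. As $A$ and $B$ are disjoint, they cannot both meet $\{n+1,n+2\}$ in two points, so at most one of them obeys the second clause. When both obey the first clause, the definition only deletes elements, so $f(A)\subseteq A$ and $f(B)\subseteq B$, and disjointness is immediate. The delicate case is $\{n+1,n+2\}\subseteq A$ (second clause) and $B\subseteq[n]$ (first clause): here $f(A)=(A\cap[n])\cup\{c\}$ with $c=\max([n]\setminus A)$, while $f(B)\subseteq B$. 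The block $A\cap[n]$ is disjoint from $B$, so the only danger is $c\in f(B)$. The key point is that $B\subseteq[n]\setminus A$, so $c$ is $\geq$ every element of $B$; hence if $c\in B$ then $c=\max B$, and therefore $c\notin B\setminus\{\max B\}=f(B)$. This disposes of the one nontrivial case and finishes the proof that $f$ is a homomorphism, completing the argument.
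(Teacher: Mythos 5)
Your proposal is correct and follows exactly the paper's route: the same map $f$ (delete $\max A$ when $|A\cap\{n+1,n+2\}|\leq 1$, otherwise swap $\{n+1,n+2\}$ for $\max([n]\setminus A)$), the same witnesses $X\cup\{n+1\}$ and $Y\cup\{n+2\}$ for semi-local surjectivity, and the same appeal to Theorem \ref{thm1}. The only difference is that you supply the case analysis verifying that $f$ is actually a homomorphism (the paper merely asserts this), and your argument for the one delicate case --- that $c=\max([n]\setminus A)$ can only lie in $B$ as $\max B$, which is deleted --- is correct.
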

\begin{proof}
{The function $f:V(KG(n+2,m+1))\rightarrow V(KG(n,m))$ which
assigns to each $A\subseteq [n+2]$ with $|A\bigcap
\{n+1,n+2\}|\leq1$, $f(A)=A\setminus\{\max A\}$ and to each
$A\subseteq [n+2]$ with $\{n+1,n+2\}\subseteq A$,
$f(A)=(A\setminus\{n+1,n+2\})\bigcup \{\max ([n]\setminus A)\}$,
is a surjective graph homomorphism from $KG(n+2,m+1)$ to $KG(n,m)$. Now for each $X\in V(KG(n,m))$, $(X \bigcup \{n+1\})\in f^{-1}(X)$ and for each $Y\in N_{KG(n,m)}(X)$, $(Y \bigcup \{n+2\})\in f^{-1}(Y)$ and $\{X \bigcup \{n+1\},Y \bigcup \{n+2\}\}\in E(KG(n+2,m+1))$. Hence, $f$ is a semi-locally-surjective graph homomorphism from $KG(n+2,m+1)$ to
$KG(n,m)$ and therefore, Theorem \ref{thm1} implies that
${\rm B}(KG(n,m)) \subseteq {\rm B}(KG(n+2,m+1))$.}
\end{proof}

\begin{cor}{ Let $a,b\in\mathbb{N}\bigcup\{0\}$ and $a>2b$. Also, for each $i\in \mathbb{N}\setminus\{1\}$, let ${\rm B}_{i}:={\rm B}(KG(2i+a,i+b))$ and ${\rm b_{i}}:=b(KG(2i+a,i+b))$.
Then ${\rm B}_{2}\subseteq {\rm B}_{3}\subseteq {\rm
B}_{4}\subseteq ...\subseteq {\rm B}_{n}\subseteq {\rm
B}_{n+1}\subseteq ...$ , and ${\rm b_{2}}\leq {\rm b_{3}}\leq {\rm
b_{4}}\leq ...\leq {\rm b_{n}}\leq {\rm b_{n+1}}\leq ...$} .
\end{cor}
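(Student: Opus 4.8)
The plan is to obtain the entire corollary as a direct iteration of Theorem 5, combined with the characterization of $b(G)$ furnished by Theorem 1. The whole content is bookkeeping with the indices, so I expect no genuine obstacle beyond checking that the parametrization in the corollary lines up exactly with the shift $(n,m)\mapsto(n+2,m+1)$ appearing in Theorem 5.

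First I would fix $i\in\mathbb{N}\setminus\{1\}$ and set $n:=2i+a$ and $m:=i+b$, so that $KG(2i+a,i+b)=KG(n,m)$. The crucial observation is that $(n+2,m+1)=(2i+a+2,\,i+b+1)=(2(i+1)+a,\,(i+1)+b)$, hence $KG(n+2,m+1)$ is precisely the next term $KG(2(i+1)+a,(i+1)+b)$ of the family. Thus a single application of Theorem 5 at the pair $(n,m)$ produces exactly the inclusion ${\rm B}_{i}\subseteq {\rm B}_{i+1}$.

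Next I would verify that Theorem 5 is applicable, i.e. that its hypothesis $n>2m$ holds. Since $n-2m=(2i+a)-2(i+b)=a-2b$, the inequality $n>2m$ is equivalent to $a>2b$, which is exactly the standing hypothesis of the corollary; note also that $a>2b\geq 0$ together with $i\geq 2$ guarantees $2i+a\geq i+b$, so each $KG(2i+a,i+b)$ is a genuine Kneser graph and Theorem 5 indeed applies. Running Theorem 5 over every $i\geq 2$ therefore yields ${\rm B}_{i}\subseteq {\rm B}_{i+1}$ for all such $i$, which is precisely the desired chain ${\rm B}_{2}\subseteq {\rm B}_{3}\subseteq\cdots$.

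Finally, for the $b$-chromatic numbers I would invoke Theorem 1, which identifies $b(G)$ as the greatest natural number in ${\rm B}(G)$; in particular ${\rm b}_{i}=\max {\rm B}_{i}$. Given the inclusion ${\rm B}_{i}\subseteq {\rm B}_{i+1}$ just established, the element $\max {\rm B}_{i}$ belongs to ${\rm B}_{i+1}$, so ${\rm b}_{i}=\max {\rm B}_{i}\leq\max {\rm B}_{i+1}={\rm b}_{i+1}$. Applying this for all $i\geq 2$ gives ${\rm b}_{2}\leq {\rm b}_{3}\leq\cdots$, completing the argument. The only step that requires any care is the index matching in the first paragraph; once that is confirmed, both chains follow immediately.
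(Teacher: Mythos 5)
Your proposal is correct and is exactly the argument the paper intends: the index shift $(2i+a,i+b)\mapsto(2(i+1)+a,(i+1)+b)$ matches the $(n,m)\mapsto(n+2,m+1)$ step of the preceding theorem, whose hypothesis $n>2m$ reduces to $a>2b$, and the chain of inclusions plus $b_i=\max B_i$ gives the monotonicity of the $b$-chromatic numbers. The paper leaves this as an immediate corollary without a written proof, and your verification fills in precisely those routine details.
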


Now we introduce some special conditions for graphs to be
$b$-continuous. But first we note that in a graph $G$ with at
least one cycle, the girth of $G$ ($g(G)$), is the minimum of all
cycle lengths of $G$ and if $G$ has not any cycles, the girth of
$G$ is defined $g(G)=+\infty$ .
\\

Blidia, et al. proved the following theorem.
\begin{theorem}{( {\rm \cite{bli}} )\label{thm5} If $d\leq6$, then for every $d$-regular graph $G$ with
girth $g(G)\geq5$ which is different from the Petersen graph,
$b(G)=d+1$. }
\end{theorem}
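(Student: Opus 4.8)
The plan is to prove the two inequalities $b(G)\le d+1$ and $b(G)\ge d+1$ separately, the first being routine and the second carrying all the content. For the upper bound, note that in any colorful $k$-coloring a $b$-dominating vertex must have at least $k-1$ neighbours (one in each of the other colour classes); since $G$ is $d$-regular this forces $k-1\le d$, hence $b(G)\le d+1$. It therefore remains to produce a colorful $(d+1)$-coloring, that is, a proper colouring with colours $\{1,\dots,d+1\}$ together with $d+1$ vertices $v_1,\dots,v_{d+1}$, one of each colour, such that each $v_i$ is $b$-dominating. Because $G$ is $d$-regular, the condition that $v_i$ sees all other colours is equivalent to saying that its $d$ neighbours receive the $d$ colours different from $i$, each exactly once; so the real task is to choose $d+1$ \emph{seed} vertices and colour so that every $N(v_i)$ is rainbow.

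First I would record the two structural consequences of $g(G)\ge 5$ that drive the argument: there are no triangles, so every neighbourhood $N(v)$ is an independent set; and there are no $4$-cycles, so any two vertices have at most one common neighbour. Together with the Moore bound $|V(G)|\ge d^2+1$ these give a lot of room. The mechanism is cleanest when the seeds are spread out: if $v_1,\dots,v_{d+1}$ are pairwise at distance at least $4$, then their closed neighbourhoods are pairwise disjoint with no edges between the open neighbourhoods, and each $N(v_i)$ is independent. One may then colour $v_i$ with $i$ and biject $N(v_i)$ onto the remaining colours with no interference; the resulting partial colouring is proper and permanently fixes the $b$-dominating property at each $v_i$. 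It extends greedily to the rest of $G$, since every still-uncoloured vertex has only $d$ neighbours while $d+1$ colours are available.

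The difficulty is that a volume count shows one cannot in general find $d+1$ vertices pairwise at distance $4$: the ball of radius $3$ already has size of order $d^3$ (for instance $187$ when $d=6$), whereas $|V(G)|$ may be as small as $d^2+1$, so a distance-$4$ packing can be forced to be tiny. Hence some seeds must be taken at distance $2$ or $3$, where their rainbow requirements interact, either at a shared neighbour (unique, by the $4$-cycle-free property) or along an edge joining their neighbourhoods. The \textbf{main obstacle} is thus to select the $d+1$ seeds well and then satisfy all the rainbow constraints simultaneously by a proper colouring; I would handle this by a system-of-distinct-representatives / Hall-type argument on the available colours at the interacting neighbourhoods, after which the greedy extension finishes as before. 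It is precisely here that the hypothesis $d\le 6$ enters: the relevant availability and conflict counts close only for small $d$, so the proof proceeds by a finite case analysis on the local configurations of nearby seeds.

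Finally I would isolate the single type of configuration in which no admissible seed selection exists. For $d\le 6$ this situation turns out to arise only for the Petersen graph (the $3$-regular girth-$5$ Moore graph), which is excluded from the statement because a direct check gives $b(G)=3<4$ there; for every other $d$-regular, girth-$\ge 5$ graph with $d\le 6$ the selection succeeds and produces the colorful $(d+1)$-coloring, yielding $b(G)=d+1$.
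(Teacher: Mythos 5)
First, a point of comparison: the paper does not prove this statement at all — it is quoted from Blidia, Maffray and Zemir \cite{bli} and used as a black box — so there is no internal proof to measure your attempt against; it has to stand on its own relative to the argument in \cite{bli}.

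Your framework is the right one, and is essentially the one used in \cite{bli}: the bound $b(G)\le \Delta(G)+1=d+1$ is immediate, and the content is to exhibit $d+1$ seed vertices, one per colour, precolour each closed neighbourhood $N[v_i]$ so that $v_i$ gets colour $i$ and $N(v_i)$ is a rainbow on the remaining $d$ colours, and then extend greedily (which does work: every remaining vertex has only $d$ neighbours, and the $b$-domination of each $v_i$ is already locked in because $N[v_i]$ is fully coloured). The distance-$\ge 4$ case is handled correctly, and your volume count correctly explains why that case cannot be the whole story. The genuine gap is that everything after that point is asserted rather than proved, and that is where the entire difficulty of the theorem lives. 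For seeds forced to lie at distance $2$ or $3$ you invoke ``a system-of-distinct-representatives / Hall-type argument'' and ``a finite case analysis on the local configurations'', and you claim the only irreparable configuration is the Petersen graph; but you never specify the bipartite systems on which Hall's condition is to be checked, never verify it in a single configuration, never enumerate the local configurations, and never prove that failure forces $G$ to be the Petersen graph. You also do not argue that an admissible set of $d+1$ seeds exists at all when $|V(G)|$ is close to the Moore bound $d^2+1$ and every pair of candidate seeds is at distance $\le 2$. These are precisely the steps in which the hypotheses $g(G)\ge 5$, $d\le 6$ and $G\ne$ Petersen must actually be used, and none of that use is exhibited. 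As written, the proposal is a correct and well-motivated strategy — essentially the strategy of \cite{bli} — but the missing case analysis \emph{is} the theorem.
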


By using this theorem, we prove the following theorem.

\begin{theorem}{ Let $3\leq d\leq6$ and for each $2\leq i\leq d$,
$G_{i}$ be an $i$-regular graph with girth $g(G_{i})\geq5$ which
 is different from the Petersen graph. Also, suppose that for each $3\leq i\leq d$, there exists
 a semi-locally-surjective graph homomorphism $f_{i}$ from $G_{i}$ to $G_{i-1}$.
Then for each $2\leq i\leq d$, $G_{i}$ is $b$-continuous. }
\end{theorem}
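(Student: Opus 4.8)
The plan is to control the whole interval $[\chi(G_i),b(G_i)]$ using three ingredients: the exact values of the $b$-chromatic numbers supplied by Theorem \ref{thm5}, the nested family of sets ${\rm B}(G_i)$ produced by the hypothesized homomorphisms through Theorem \ref{thm1}, and a sharp bound on the chromatic numbers $\chi(G_i)$.

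First I would fix the two endpoints of the interval. Each $G_i$ is $i$-regular of girth at least $5$ with $i\le d\le 6$ and is not the Petersen graph, so Theorem \ref{thm5} gives $b(G_i)=i+1$ for all $2\le i\le d$. For the lower endpoint, $G_2$ is $2$-regular, hence a disjoint union of cycles, so $\chi(G_2)\le 3$; each $f_i$ is in particular a graph homomorphism $G_i\rightarrow G_{i-1}$, so $\chi(G_i)\le\chi(G_{i-1})$, and composing down to $G_2$ gives $\chi(G_i)\le\chi(G_2)\le 3$. Since each $G_i$ contains an edge, $2\le\chi(G_i)\le 3$ throughout.

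Next I would convert the homomorphisms into inclusions. Applying Theorem \ref{thm1} to the semi-locally-surjective homomorphism $f_i:G_i\rightarrow G_{i-1}$ yields ${\rm B}(G_{i-1})\subseteq{\rm B}(G_i)$, and hence the chain ${\rm B}(G_2)\subseteq{\rm B}(G_3)\subseteq\cdots\subseteq{\rm B}(G_d)$. To populate the upper part of the interval, fix $i$ and an integer $j$ with $3\le j\le i+1$. Then $G_{j-1}$ is $(j-1)$-regular with $2\le j-1\le i$, so Theorem \ref{thm5} gives $b(G_{j-1})=j$ and thus $j\in{\rm B}(G_{j-1})$; since $j-1\le i$, the inclusion chain carries this up to $j\in{\rm B}(G_i)$. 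Therefore $\{3,4,\ldots,i+1\}\subseteq{\rm B}(G_i)$.

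Finally I would handle the very bottom of the interval, which is where the only real difficulty lies: the chromatic number can drop from $3$ to $2$ as we pass down the chain, so the inclusion chain alone need not deliver the value $2$. If $\chi(G_i)=3$, the previous paragraph already shows $\{\chi(G_i),\ldots,b(G_i)\}\subseteq{\rm B}(G_i)$. If $\chi(G_i)=2$, then $G_i$ is bipartite, and I would observe directly that any proper $2$-coloring is colorful: every vertex has a neighbor in the opposite class, so every vertex is $b$-dominating and $2\in{\rm B}(G_i)$; combined with the previous step this gives $\{2,3,\ldots,i+1\}\subseteq{\rm B}(G_i)$. In either case every integer between $\chi(G_i)$ and $b(G_i)$ belongs to ${\rm B}(G_i)$, so $G_i$ is $b$-continuous. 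The step requiring the most care is thus this bipartite repair, together with the bookkeeping that $2\le\chi(G_i)\le 3$ so that no other gap can open at the bottom of the interval.
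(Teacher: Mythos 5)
Your proposal is correct and follows essentially the same route as the paper: Theorem \ref{thm5} pins down $b(G_i)=i+1$, Theorem \ref{thm1} turns the homomorphisms into the inclusion chain ${\rm B}(G_2)\subseteq\cdots\subseteq{\rm B}(G_d)$ giving $\{3,\ldots,i+1\}\subseteq{\rm B}(G_i)$, and the bottom of the interval is settled by the same bipartite/non-bipartite case split. Your extra bookkeeping that $\chi(G_i)\le 3$ (via $G_2$ being a union of long cycles and homomorphisms not increasing the chromatic number) is a nice touch the paper omits, but it is not needed for the conclusion.
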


\begin{proof}{
Theorem \ref{thm5} implies that for each $2\leq i\leq d$,
$b(G_{i})=i+1$ and therefore, $i+1 \in {\rm B}(G_{i}) $. Also,
since for each $3\leq i\leq d$, there exists a
semi-locally-surjective graph homomorphism $f_{i}$ from $G_{i}$
to $G_{i-1}$, theorem \ref{thm1} implies that ${\rm
B}(G_{i-1})\subseteq {\rm B}(G_{i})$ and consequently, ${\rm
B}(G_{2})\subseteq {\rm B}(G_{3})\subseteq  ... \subseteq {\rm
B}(G_{d}).$ Hence, for each $2\leq i\leq d$, $\{j+1|2\leq j\leq
i\}\subseteq {\rm B}(G_{i})$ and therefore,
$\{3,4,...,i+1\}\subseteq {\rm B}(G_{i})$. Now, there are 2 cases:

Case 1) The case that $G_{i}$ is bipartite. In this case,
$\chi(G_{i})=2$ and therefore, $2\in {\rm B}(G_{i})$ and
$\{2,3,...,i+1\}\subseteq {\rm B}(G_{i})$, so ${\rm
B}(G_{i})=\{2,3,...,i+1\}$ and $G_{i}$ is $b$-continuous.

Case 2) The case that $G_{i}$ is not bipartite. In this case,
$\chi(G_{i})\geq 3$ and since $\{3,...,i+1\}\subseteq B(G_{i})$,
so ${\rm B}(G_{i})=\{3,...,i+1\}$ and $G_{i}$ is $b$-continuous.
\\
\\
Therefore, for each $2\leq i\leq d$, $G_{i}$ is $b$-continuous. }
\end{proof}

\end{document}